\newtheorem{lemma}{Lemma}
\newtheorem{assumption}{Assumption}
\newtheorem{theorem}{Theorem}
\newtheorem{proposition}{Proposition}
\newtheorem{corollary}{Corollary}
\newtheorem{remark}{Remark}
\newcommand{\dN}{\mathbb {N}}
\newcommand{\dZ}{\mathbb {Z}}
\newcommand{\cF}{\mathcal {F}}
\newcommand{\cU}{\mathcal {U}}
\newcommand{\id}{\circ}
\newcommand{\cV}{\mathcal {V}}
\newcommand{\cA}{\mathcal {A}}
\newcommand{\cI}{\mathcal {I}}
\newcommand{\dR}{\mathbb {R}}
\newcommand{\ee}{e}
\newcommand{\EE}{{\mathbb{E}}}
\newcommand{\PP}{{\mathbb{P}}}
\newcommand{\dX}{{\mathbb{X}}}
\newcommand{\diam}{{\mathrm{diam}}}
\newcommand{\tmix}{{\mathrm{t}_{\textsc{mix}}}}
\newcommand{\trel}{{\mathrm{t}_{\textsc{rel}}}}
\newcommand{\Var}{{\mathrm{Var}}}
\newcommand{\Val}{{\mathrm{val}}}
\newcommand{\Varent}{{\mathrm{Varent}}}
\title{Concentration of information on discrete groups}
\author{Jonathan Hermon, Xiangying Huang, Francesco Pedrotti, Justin Salez}
\begin{document}
\maketitle
\abstract{Motivated by the Asymptotic Equipartition Property and its recently discovered role in the cutoff phenomenon, we initiate the systematic study of varentropy on discrete groups. Our main result is an approximate tensorization inequality which asserts that the varentropy of any conjugacy-invariant random walk is, up to a universal multiplicative constant, at most that of the free Abelian random walk with the same jump rates. In particular, it is always bounded by the number $d$ of generators, uniformly in time and in the size of the group. This universal estimate is sharp and can be seen as a discrete analogue of a celebrated result of Bobkov and  Madiman concerning random $d-$dimensional vectors with a log-concave density (AOP 2011). A key ingredient in our proof is the fact that conjugacy-invariant random walks have non-negative Bakry-\'Emery curvature, a result which seems new and of independent interest.}

\tableofcontents
\section{Introduction}
\subsection{The Asymptotic Equipartition Property}
Let $X$ be  a random variable  taking values in a measured space $(\dX,\cA,\pi)$ and admitting a density $f$ w.r.t. $\pi$. In Information Theory, the real-valued random variable
\begin{eqnarray}
\label{def:info}
\cI(X) & := & -\log f(X),
\end{eqnarray}
is known as the \emph{information content} of $X$, because it quantifies the level of ``surprise'' or ``unpredictability'' inherent to the actual observation of $X$; see, e.g.,  \cite{MR2239987}.  Its expectation $H(X):=\EE[\cI(X)]$ is nothing but the  \emph{entropy} of $X$, which plays a fundamental role in many branches of mathematics, physics and computer science.

An essential property of the information content is \emph{tensorization}: if $X_1,\ldots,X_d$ are independent, each taking values in its own measured space, then the collection $X=(X_1,\ldots,X_d)$ (taking values in the resulting product space) satisfies
\begin{eqnarray}
\cI(X) & = & \cI(X_1)+\cdots+\cI(X_d).
\end{eqnarray}
In particular, when $X_1,\ldots,X_d$ are i.i.d. and $d$ is large, $\cI(X)$ is highly concentrated around $dH(X_1)$, by virtue of the law of large numbers: in words, the outcome of an experiment consisting of a huge number of independent and identically distributed observations is very likely contained in a deterministic set of ``typical outcomes'', all of which are ``roughly equally surprising''. This elementary fact happens to be the simplest instance of a very general phenomenon discovered by Shannon \cite{MR4679166}, McMillan \cite{MR55621} and Breiman \cite{MR92710}, and known as the \emph{Asymptotic Equipartition Property}. It applies to any ergodic process and can  be considered as the very starting point of Information Theory. We refer the reader to the classical textbook \cite{MR2239987} for details. 

 As any limit theorem, the Asymptotic Equipartition Property needs to be made quantitative in order to become truly effective, and this naturally amounts to controlling the  variance of the information content:
\begin{eqnarray}
\label{def:varent}
\Varent(X) & := & \Var\left(\cI(X)\right).
\end{eqnarray}
Because it quantifies the fluctuations around the entropy, this fundamental statistics has been beautifully called \emph{varentropy}. It appeared a decade ago in the context of optimal data compression \cite{inproceedings}, but has since then been shown to play an important role in  completely different areas, such as importance sampling \cite{MR3784496} or the cutoff phenomenon \cite{MR4780485,MR4765357}. Just like the information content and entropy, the varentropy tensorizes: When $X_1,\ldots,X_d$ are independent, the collection $X=(X_1,\ldots,X_d)$ satisfies
\begin{eqnarray}
\Varent(X) & = & \Varent(X_1)+\cdots+\Varent(X_d).
\end{eqnarray}
In particular, while the entropy of a collection of i.i.d. random variables grows  linearly with the dimension $d$, the fluctuations of the information content around the entropy are of order $\sqrt{d}$ only: this constitutes a quantitative, non-asymptotic version of the Asymptotic Equipartition Property. Extending the latter to dependent data is a natural and important problem, on which very little progress has been made. 

An emblematic result in this direction is the celebrated varentropy estimate of Bobkov and  Madiman \cite{MR2857249} for log-concave random vectors. Specifically, let $(\dX,\cA,\pi)$ be the $d-$dimensional Euclidean space $\dR^d$ equipped with its Borel $\sigma-$field and the Lebesgue measure, and assume that the $\dX-$valued random variable $X=(X_1,\ldots,X_d)$ admits a density of the form $f=e^{-V}$, where $V$ is convex. Then, Bobkov and  Madiman proved  the existence of a universal constant $c<\infty$ such that
\begin{eqnarray}
\label{logconcave}
\Varent(X) & \le & cd,
\end{eqnarray}
regardless of the distributional details of $X$. In other words, the information content of a log-concave random vector concentrates at least as well as if its coordinates were independent.  This remarkable result has since then been reproved, refined and extended in several directions. In particular, the value of the constant  has been sharpened to $c=1$ \cite[Corollary 6]{MR3132733}, which turned out to be optimal \cite{MR3565259}. As noted by Chewi \cite{MR4651220}, the estimate (\ref{logconcave}) can be seen as a special case of a certain dimensional improvement of the celebrated Brascamp-Lieb inequality \cite[Proposition 4.1]{MR3758731}. Moreover, the log-concavity assumption can be somewhat relaxed, at a well-understood price \cite{MR4073681}. 

For several reasons -- one being the cutoff phenomenon, discussed below -- it would be highly desirable to develop an appropriate analogue of the above result on discrete spaces,  such as the $d-$dimensional Euclidean lattice $\dZ^d$ and its quotients. However, finding the right substitute for log-concavity seems far from obvious, and the various convex-analytical tools and changes of variables used in the proofs of (\ref{logconcave}) do not seem to admit a clear discrete counterpart. The main contribution of the present paper is a universal varentropy estimate of the form (\ref{logconcave}) for conjugacy-invariant random walks on arbitrary discrete groups, with the role of the dimension $d$ being naturally played by the number of generators. This result actually follows from a more refined, time-dependent statement which compares the varentropy of the walk to that of its ``free Abelian version'', in which the $d$ coordinates evolve independently and without torsion. %This has important  implications regarding the occurrence of the celebrated \emph{cutoff phenomenon} for conjugacy-invariant random walks on finite groups.

\subsection{Setup and main result}
In the remainder of the paper,  $\dX$ is a discrete group, equipped with the counting measure. We write $\id$ for the identity element, $x^{-1}$ for the inverse of $x$, and $xy$ for the product of $x$ by $y$. By definition, the varentropy of  a  $\dX-$valued random variable $X$ is  
\begin{eqnarray}
\Varent(X) & = & \Var(\log f(X)),
\end{eqnarray}
where $f\colon x\mapsto \PP(X=x)$ is the probability mass function of $X$. As motivated in the first section, our aim is to investigate the varentropy of certain \emph{random walks} on $\dX$. 

Specifically, we fix  a finitely-supported probability vector $\mu$ on $\dX$, and we consider the continuous-time  Markov chain $(X_t)_{t\ge 0}$ whose state space is $\dX$, whose initial state is $X_0=\id$, and whose generator acts on bounded functions $f\colon \dX\to\dR$ as follows:
\begin{eqnarray}
\label{def:L}
\forall x\in\dX,\qquad (Lf)(x) & := & \sum_{z\in\dX}\mu(z)\left(f(xz)-f(x)\right).
\end{eqnarray}
In more concrete terms, the current position gets right-multiplied by a random sample from $\mu$ at rate one. Beyond the finiteness of its support and the (harmless) normalization, the only structural assumptions that we shall impose on  $\mu$ are the following:
\begin{assumption}[Reversibility and conjugacy invariance]The measure $\mu$ satisfies:
\begin{enumerate}[(i)]
\item[A1.] Conjugacy invariance: $\mu(xzx^{-1})=\mu(z)$, for all $x,z\in\dX$.
\item[A2.] Reversibility: $\mu(z)=\mu(z^{-1})$, for all $z\in\dX$.
\end{enumerate}
\end{assumption}
Note that the first condition is trivially satisfied when the group $\dX$ is Abelian. An emblematic non-Abelian example is of course the \emph{transposition walk}, where  $\dX=\mathfrak S_n$ is the symmetric group of order $n$ and $\mu$ the uniform measure on the set of transpositions. 

In general, the value of $\mu(\id)$ is irrelevant in the definition (\ref{def:L}), so we may safely assume that $\mu(\id)=0$. We can then write
\begin{eqnarray}
\mu & = & \sum_{i=1}^d \mu_i\left(\delta_{\ee_i}+\delta_{\ee_i^{-1}}\right),
\end{eqnarray}
for some $d\in\dN$, some rates $\mu_1,\ldots\mu_d>0$ and some group elements $\ee_1,\ldots,\ee_d$, henceforth referred to as the \emph{generators} of the random walk. To formulate our main result, we introduce the function $\cV$ defined as follows: 
\begin{eqnarray}
\label{def:V}
\forall t\ge 0,\qquad \cV(t) & := & t\,\log^2\left(1+ \frac{1}{\sqrt{t}}\right).
\end{eqnarray}
This function is easily seen to be continuously increasing from $\cV(0)=0$ to $\cV(+\infty)=1$. 
\begin{theorem}[Main estimate]\label{th:main}There exists a universal constant $c$  such that
\begin{eqnarray*}
\forall t\ge 0,\qquad \Varent(X_t) & \le & c\,\sum_{i=1}^d \cV\left(\mu_i t\right).
\end{eqnarray*}
Moreover, this estimate is sharp in the sense that the converse inequality holds (with a different value of $c$) when $(X_t)_{t\ge 0}$ is the random walk on $\dZ^d$ which moves by $\pm 1$ unit in the $i$-th direction at rate $\mu_i$, independently for each $1\le i\le d$.
\end{theorem}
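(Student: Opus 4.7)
My plan is to prove the upper bound via three ingredients: an \emph{approximate tensorization} comparing $\Varent(X_t)$ to the varentropy of the ``free Abelian version'' $Z_t = (Z_t^{(1)},\ldots,Z_t^{(d)})$ on $\dZ^d$ with independent coordinates (the $i$-th being the continuous-time simple random walk on $\dZ$ with jump rate $\mu_i$); the \emph{exact tensorization} identity $\Varent(Z_t) = \sum_{i=1}^d \Varent(Z_t^{(i)})$ coming from independence; and a \emph{one-dimensional estimate} $\Varent(Z_t^{(i)}) \le c'\,\cV(\mu_i t)$. The sharpness assertion is then immediate from the matching lower bound in the one-dimensional estimate, tensorized.

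The heart of the argument is the approximate tensorization, and the abstract already signposts the route. First I would establish that the conjugacy-invariant generator $L$ from \eqref{def:L} satisfies the non-negative Bakry--\'Emery condition $\Gamma_2(f,f)\ge 0$. Writing $L = \sum_z \mu(z)(T_z - I)$ with $T_z f(x) := f(xz)$, the expression $\Gamma_2(f,f) = \tfrac12 L\Gamma(f,f) - \Gamma(f,Lf)$ unfolds as a four-fold sum over $(z,z')\in \supp(\mu)^2$; conjugacy invariance (A1) lets one symmetrize by reindexing $z' \mapsto z z' z^{-1}$, after which the cross-terms collapse into a manifest non-negative square. Assuming this CD$(0,\infty)$ bound, I would then derive along the semigroup a Brascamp--Lieb-type variance inequality, applied to the log-density $u_t := \log f_t$ of $X_t$, producing a representation of $\Varent(X_t)$ as a time-integrated carr\'e du champ of $u_s$ under $f_s$. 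Because the carr\'e du champ splits into $d$ single-generator contributions indexed by $\ee_1,\ldots,\ee_d$, one argues generator-by-generator that each piece is controlled by the corresponding integral for $Z_t^{(i)}$, via the natural surjection $\dZ^d\to\dX$ sending canonical generators to the $\ee_i$.

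The one-dimensional bound is elementary: $Z_t^{(i)}$ is a Skellam variable (difference of two independent Poissons with parameter $\mu_i t/2$), and a direct case split gives $\Varent(Z_t^{(i)})$ of order $(\mu_i t)\log^2(\mu_i t)$ when $\mu_i t$ is small (the distribution is concentrated at $0$ with rare $\pm 1$ values) and of order $1$ when $\mu_i t$ is large (by the local CLT, $-\log f_t$ becomes quadratic in an approximately Gaussian variable); both regimes match $\cV(\mu_i t)$ up to universal constants, and the same computation from below delivers sharpness. The main obstacle is clearly the approximate tensorization step, because a naive use of CD$(0,\infty)$ (for instance, the global Poincar\'e inequality it implies for $f_t$) at best reprises the dimension-only bound $\Varent(X_t) \le cd$ and forfeits the time-dependent refinement. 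Preserving the generator-by-generator structure -- so that the torsion and non-commutative relations of $\dX$ do not spawn extra cross-terms -- is the delicate part, and this is precisely where the interaction between conjugacy invariance and non-negative curvature must be exploited at the level of variances (rather than entropies, where the usual data-processing toolbox would suffice).
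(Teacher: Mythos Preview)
Your architecture matches the paper's: non-negative Bakry--\'Emery curvature for conjugacy-invariant walks, then the local Poincar\'e inequality
\[
\Varent(X_t)\ \le\ 2t\,\EE\bigl[\Gamma(\log f_t)(X_t)\bigr],
\]
then a generator-by-generator bound on the right-hand side, and finally the one-dimensional Skellam/local-CLT analysis for sharpness. The curvature computation and the sharpness argument you sketch are essentially what the paper does.

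The gap is the step you yourself flag as the ``main obstacle''. After the local Poincar\'e inequality, one must control, for each generator $z$,
\[
\sum_{x}\mu(z)\,f_t(x)\,\log^2\!\frac{f_t(x)}{f_t(xz)}.
\]
You propose to do this ``via the natural surjection $\dZ^d\to\dX$'', but that map does not give any usable relation between $\log f_t^{\dX}(x)-\log f_t^{\dX}(xz)$ and the one-dimensional discrete log-gradients of the Skellam kernel: pushing forward sums fibres in a way that destroys pointwise control of log-ratios, and the data-processing intuition you allude to works for entropies, not for varentropies or carr\'e-du-champ terms. Nothing in the proposal explains why torsion or non-commutative relations in $\dX$ cannot make $|\log f_t(x)-\log f_t(xz)|$ much larger than its $\dZ$-counterpart.

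The paper's actual device is entirely different and is the new idea of the proof. For fixed $z$, it builds an explicit map $\phi$ that inserts, at position $i$ of a word $w=(w_1,\dots,w_n)$, the conjugate $(w_i\cdots w_n)z(w_i\cdots w_n)^{-1}$, so that $\Val(\phi(w,i))=\Val(w)\,z$ and, by conjugacy invariance, $\mu^{\otimes(n+1)}(\phi(w,i))=\mu(z)\,\mu^{\otimes n}(w)$. Counting preimages yields the change-of-measure identity
\[
\PP(X_t=xz,\ N_{t,z}\ne 0)\ =\ t\mu(z)\,\EE\!\left[\frac{1}{1+N_{t,z}}\,\mathbf{1}_{X_t=x}\right],
\]
where $N_{t,z}$ counts the indices $j$ with $W_j=(W_{j+1}\cdots W_{N_t})z(W_{j+1}\cdots W_{N_t})^{-1}$; conjugacy invariance again makes $N_{t,z}$ a Poisson$(t\mu(z))$ variable. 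This gives the pointwise lower bound $f_t(xz)/f_t(x)\ge \EE[t\mu(z)/(1+N_{t,z})\mid X_t=x]$, and Jensen with the convex decreasing function $u\mapsto\log_+^2(1/u)$ produces a bound depending only on $t\mu(z)$, namely the function $\cU$ which is then compared to $\cV$. No comparison with the free Abelian heat kernel is ever made; the ``free Abelian'' interpretation only emerges a posteriori, from the shape of the final bound. Your plan is missing precisely this trajectorial-insertion/change-of-measure mechanism, without which the crucial gradient estimate has no proof.
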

Since the function $\cV$ is bounded, our result readily leads to a discrete analogue of the Bobkov-Madiman estimate (\ref{logconcave}), with the role of the dimension being naturally played by the number of generators of the walk:
\begin{corollary}[Discrete analogue of (\ref{logconcave})]\label{coro:d}There exists a universal constant $c$ such that
\begin{eqnarray*}
\forall t\ge 0,\qquad\Varent(X_t) & \le & cd.
\end{eqnarray*}
\end{corollary}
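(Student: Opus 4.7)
The plan is to derive Corollary \ref{coro:d} as an immediate consequence of Theorem \ref{th:main}. The crucial input is the boundedness of the function $\cV$ defined in \eqref{def:V}: as already observed in the excerpt right after the definition, $\cV$ is continuously increasing from $\cV(0)=0$ to $\cV(+\infty)=1$, so in particular $\cV(s)\le 1$ for every $s\ge 0$. Substituting this bound, term by term, into the main estimate yields
\begin{eqnarray*}
\Varent(X_t) & \le & c\sum_{i=1}^d\cV(\mu_i t)\;\le\;c\sum_{i=1}^d 1\;=\;cd,
\end{eqnarray*}
with the same universal constant $c$ as in Theorem \ref{th:main}. This is exactly the claim.

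If a self-contained verification of $\cV\le 1$ is desired, I would rewrite $\cV$ after the substitution $s:=1/\sqrt{t}$ as
\begin{eqnarray*}
\cV(t) & = & \left(\frac{\log(1+s)}{s}\right)^2,
\end{eqnarray*}
which is at most $1$ by the elementary inequality $\log(1+s)\le s$ for $s\ge 0$. Monotonicity and the correct boundary values follow from the fact that $s\mapsto\log(1+s)/s$ is continuous and strictly decreasing on $(0,+\infty)$ with limits $1$ at $0$ and $0$ at $+\infty$, composed with the decreasing change of variable $t\mapsto 1/\sqrt{t}$.

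There is no genuine obstacle in this corollary: all the substance lies in Theorem \ref{th:main}. The only point worth emphasizing is conceptual, namely that the $d$-independent shape of the bound in Theorem \ref{th:main} automatically collapses to the advertised dimension-linear bound precisely because $\cV$ saturates at $1$; this is what justifies interpreting the number of generators as the natural discrete analogue of the Euclidean dimension in the Bobkov--Madiman estimate \eqref{logconcave}.
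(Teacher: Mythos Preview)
Your proof is correct and matches the paper's own derivation: the corollary is obtained directly from Theorem~\ref{th:main} by invoking the boundedness of~$\cV$, exactly as you do. The only minor addition in the paper is the remark that one can get the slightly sharper explicit constant $c=16$ by bounding $\cU\le 8$ directly (Lemma~\ref{lm:UV}) rather than passing through $\cV$, but this is not needed for the corollary as stated.
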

\begin{remark}[Explicit values]We will see in Section \ref{sec:free} that Theorem \ref{th:main} holds with $c=43$ while Corollary \ref{coro:d} holds with $c=16$. We have not tried to optimize those values.  
\end{remark}
Here again, the result is sharp except for the value of $c$, since the varentropy of the simple random walk on $\dZ^d$ is asymptotic to $d/2$ in the large-time limit. Beyond this pleasant consequence, Theorem \ref{th:main} has an interesting interpretation: it  asserts that the varentropy of a reversible, conjugacy-invariant random walk is (up to universal multiplicative constants), at most the varentropy of the free Abelian random walk with the same rates. In view of the product structure of the latter, this principle can be seen as an \emph{approximate tensorization} property for varentropy, reminiscent of the one that has been developed for the entropy of various spin systems with weak dependencies (see the recent work \cite{caputo2024entropyfactorizationcurvature} and the references therein). Extending this approximate tensorization of varentropy beyond the present  group-theoretical setup would lead to a substantial progress on our understanding of the cutoff phenomenon. 
\subsection{Relation to the cutoff phenomenon} Discovered four decades ago in the context of card shuffling \cite{aldous1986shuffling,aldous1983mixing,diaconis1996cutoff}, the cutoff phenomenon is an abrupt transition from out of equilibrium to equilibrium undergone by certain Markov processes in the limit where the size of the state space tends to infinity: instead of decaying gradually over time, their distance to equilibrium remains close to the maximal value for a while and suddenly drops to zero as the time parameter reaches a critical threshold. Despite the accumulation of many examples, this phenomenon is still far from being understood, and identifying the general conditions that trigger it has become one of the biggest challenges in the quantitative analysis of finite Markov chains. Very recently, one of the authors provided a sharp criterion for the occurrence of a cutoff \cite{MR4780485,MR4765357}, based on varentropy. In a nutshell, the latter reads
\begin{eqnarray}
\label{criterion}
\frac{\tmix}{\trel} & \gg & 1+\sqrt{\Varent(X_{\tmix})},
\end{eqnarray}
where $\tmix$ and $\trel$ respectively denote the mixing time and relaxation time of the chain, and where the notation $a \gg b$ means that the ratio $a/b$ diverges as the number of states tend to infinity. We refer the unfamiliar reader to \cite{MR4780485} for  details. 
The potential use of this criterion to explain and predict cutoff clearly motivates the development of a varentropy theory for Markov processes on discrete state spaces, and this is exactly the program that we initiate in the present work. 

A first, naive varentropy estimate was actually obtained  in \cite{MR4780485} for non-negatively curved chains, such as our conjugacy invariant random walk: it reads
\begin{eqnarray}
\label{prior}
\forall t\ge \frac{1}{4}\diam(\dX),\qquad \Varent(X_t) & \le & c\, t\log^2\left\{\frac{1}{\mu_{\min}}\right\},
\end{eqnarray}
where $c$ is a universal constant, $\mu_{\min}$ the minimum non-zero entry of $\mu$, and  $\diam(\dX)$ the diameter of the Cayley graph generated by the support of $\mu$.  As demonstrated in \cite{MR4780485}, this is already sufficient to imply cutoff for a number of interesting models, including the simple random walk on `almost-all' Abelian Cayley graphs of logarithmic degree. Our estimate is always better than (\ref{prior}): indeed, for $t\ge \frac{1}{4}\diam(\dX)$, it reads
\begin{eqnarray*}
\Varent(X_t) & \le & ct \,\sum_{z\in\dX}\mu(z)\log^2\left\{1+\sqrt{\frac{1}{\mu(z)t}}\right\}\\
& \le & ct\log^2\left\{1+\sqrt{\frac{1}{\mu_{\min}t}}\right\}\\
& \le & ct\log^2\left\{1+\sqrt{\frac{4}{\mu_{\min}\diam(\dX)}}\right\}\\
& \le & ct\log^2\left\{1+\sqrt{\frac{4}{\mu_{\min}}}\right\}\\
& \le & \widetilde{c}\, t\log^2\left\{\frac{1}{\mu_{\min}}\right\},
\end{eqnarray*}
and each of those inequalities can be arbitrary loose for certain choices of the parameters $(\dX,\mu,t)$.
In addition, our result has a number of substantial  advantages over (\ref{prior}):
\begin{enumerate}
\item It is uniformly bounded in time, instead of diverging as $t\to\infty$.
\item It also applies to infinite groups, whereas (\ref{prior}) is void when $\diam(\dX)=\infty$. 
\item It remains valid at short times, instead of being restricted to macroscopic scales. 
\item It is not affected by unlikely jumps, as opposed ot the factor $\log^2(\frac{1}{\mu_{\min}})$ in   (\ref{prior}). 
\end{enumerate}
 Thus, using our main estimate instead of (\ref{prior}) immediately leads to a much broader class of random walks exhibiting cutoff. To keep the exposition simple, 
we only mention one emblematic application, which does not even use the full strength of Theorem \ref{th:main} but only the simpler  bound appearing in Corollary \ref{coro:d}.
\begin{corollary}[A simple new condition for cutoff]For reversible,  conjugacy-invariant random walks on arbitrary finite groups, the cutoff phenomenon occurs as soon as
\begin{eqnarray*}
 \frac{\tmix}{\trel} & \gg & \sqrt{d},
 \end{eqnarray*} 
where $d$ denotes the number of generators of the walk. 
\end{corollary}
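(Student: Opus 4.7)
The plan is to deduce this corollary directly from Corollary \ref{coro:d} combined with the varentropic cutoff criterion (\ref{criterion}) of \cite{MR4780485,MR4765357}. The only work involved is to convert the dimension-free varentropy bound into the form required by the criterion.

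First, I would invoke Corollary \ref{coro:d} at the specific time $t=\tmix$: since the walk is reversible and conjugacy-invariant, we have $\Varent(X_{\tmix}) \le cd$ for a universal constant $c$ (explicitly $c=16$, per the remark following the corollary), with no dependence on the group, on the jump rates, or on the mixing time itself. Hence
\begin{eqnarray*}
1+\sqrt{\Varent(X_{\tmix})} & \le & 1+\sqrt{cd} \;\;\le\;\; (1+\sqrt{c})\sqrt{d},
\end{eqnarray*}
where the last inequality uses $d\ge 1$ (the case $d=0$ is trivial, since the walk is then constant).

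Next, I would plug this estimate into the criterion (\ref{criterion}). Under the standing hypothesis $\tmix/\trel \gg \sqrt{d}$, and by the monotonicity of $\gg$ under multiplication by the fixed constant $(1+\sqrt{c})$, we obtain
\begin{eqnarray*}
\frac{\tmix}{\trel} & \gg & \sqrt{d} \;\;\ge\;\; \frac{1}{1+\sqrt{c}}\Big(1+\sqrt{\Varent(X_{\tmix})}\Big),
\end{eqnarray*}
so that $\tmix/\trel \gg 1+\sqrt{\Varent(X_{\tmix})}$, which is exactly the varentropic criterion for cutoff. Conclusion follows.

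There is no real obstacle here; the corollary is essentially a packaging result. The entire non-trivial content sits in Theorem \ref{th:main} (through Corollary \ref{coro:d}), which supplies the universal $O(d)$ bound on varentropy, and in the pre-existing cutoff criterion (\ref{criterion}). The only point to double-check is that the criterion \eqref{criterion} as proved in \cite{MR4780485,MR4765357} applies to the continuous-time, conjugacy-invariant setting considered here, which it does since reversibility (assumption A2) is the only hypothesis it requires.
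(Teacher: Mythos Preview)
Your proposal is correct and matches the paper's own approach exactly: the paper does not spell out a separate proof but presents the corollary as an immediate consequence of combining Corollary~\ref{coro:d} (applied at $t=\tmix$) with the varentropic criterion~(\ref{criterion}), which is precisely what you do. The only content you add is making explicit the trivial constant manipulation $1+\sqrt{cd}\le (1+\sqrt{c})\sqrt{d}$, which the paper leaves implicit.
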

\section*{Acknowledgment}This work is supported by the ERC consolidator grant CUTOFF (101123174). Views and opinions expressed are however those of the authors only and do not necessarily reflect those of the European Union or the European Research Council Executive Agency. Neither the European Union nor the granting authority can be held responsible for them.
\section{Proof of Theorem \ref{th:main}}

The starting point of our proof is the new observation that conjugacy-invariant random walks on groups are non-negatively curved in the sense of Bakry-\'Emery (see Section \ref{sec:curvature}). By virtue of the celebrated \emph{local Poincaré inequality}, this reduces the task of controlling the varentropy to that of estimating the expected squared gradient of the logarithmic heat kernel. To achieve the latter, we introduce and analyze  a particular measure-preserving transformation  on the space of trajectories, which exploits conjugacy invariance in a crucial way (see Section \ref{sec:logheat}). The sharpness of our estimate is finally established in Section \ref{sec:free}, through the explicit analysis of the varentropy of the free Abelian random walk. 
\subsection{Bakry-\'Emery curvature}
\label{sec:curvature}
Introduced four decades ago in the context of diffusions on manifolds \cite{MR889476}, the Bakry-\'Emery theory of curvature  has quickly become one of the most powerful tools in the quantitative study of geometric, probabilistic and functional-analytical properties of Markov semi-groups. We refer the reader to the  textbook \cite{MR3155209} for a comprehensive introduction. To keep the exposition simple, we shall here restrict our attention to discrete state spaces, as considered, e.g., in the seminal papers \cite{MR1168070, MR1665591, MR2644381}. We let $\cA:=L^\infty(\dX)$ denote the algebra of bounded functions on $\dX$. Given a Markov generator $L\colon \cA\to\cA$, we define the  associated \emph{carré du champ operator} $\Gamma\colon \cA^2\to\cA$ as follows:
\begin{eqnarray}
\label{def:Gamma}
\Gamma(f,g) & := & \frac{1}{2}\left[L(fg)-fLg-gLf\right].
\end{eqnarray}
Similarly, the \emph{iterated carré du champ operator} $\Gamma_2\colon \cA^2\to\cA$ is given by
\begin{eqnarray}
\label{def:Gamma2}
\Gamma_2(f,g) & := & \frac{1}{2}\left[L\Gamma(f,g)-\Gamma(f,Lg)-\Gamma(g,Lf)\right].
\end{eqnarray}
When $g=f$, we  simply write $\Gamma(f)$ and $\Gamma_2(f)$. 
By definition, the \emph{Bakry-\'Emery curvature} of the generator $L$ is then simply the largest number $\kappa\in[-\infty,\infty)$ such that the  following functional inequality is satisfied:
\begin{eqnarray}
\label{def:CDK}
\forall f\in\cA,\qquad \Gamma_2(f) & \ge & \kappa\, \Gamma(f).
\end{eqnarray}
Over the past years, a lower bound on the Bakry-\'Emery curvature of Markov generators on discrete state spaces has been shown to imply an array of powerful quantitative estimates on the underlying semi-group  \cite{MR3173151,MR3910593,MR3581303,MR4367431,MR2644381,MR3776357}. It turns out that conjugacy-invariant random walks on groups always have non-negative Bakry-\'Emery curvature. Interestingly, reversibility is  actually not even needed. This new result generalizes a classical one due to Klartag, Kozma, Ralli  and Tetali \cite{MR3492631}, asserting that random walks on  Cayley graphs of Abelian groups have non-negative Bakry-\'Emery curvature. 
\begin{theorem}[Conjugacy-invariant random walks are non-negatively curved]The generator (\ref{def:L}) has non-negative Bakry-\'Emery curvature.
\end{theorem}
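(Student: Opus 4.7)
The plan is to prove non-negativity of $\Gamma_2$ by direct computation, exploiting conjugacy invariance at exactly one crucial step: the freedom to swap $wz$ with $zw$ inside the double sum defining $L\Gamma(f)$. I expect this calculation to collapse to an explicit sum-of-squares identity, so non-negativity will be automatic; note that reversibility should play no role.

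First I would compute the carré du champ from the definition. Direct expansion of $L(fg)$ and polarization yield the familiar Dirichlet form expression
\[
\Gamma(f)(x)\;=\;\tfrac{1}{2}\sum_{z\in\dX}\mu(z)\bigl(f(xz)-f(x)\bigr)^2.
\]
Next I would expand the two ingredients of $2\Gamma_2(f)=L\Gamma(f)-2\Gamma(f,Lf)$ as double sums indexed by $(w,z)$. The term $L\Gamma(f)(x)$ produces the integrand $(f(xwz)-f(xw))^2-(f(xz)-f(x))^2$; the term $2\Gamma(f,Lf)(x)$ produces $2(f(xz)-f(x))\,[f(xzw)-f(xz)-f(xw)+f(x)]$. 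These expressions disagree because the group is non-Abelian, and this is precisely where conjugacy invariance enters: substituting $z=w^{-1}z'w$ (so that $wz=z'w$) in the first sum, the hypothesis $\mu(w^{-1}z'w)=\mu(z')$ shows that
\[
\sum_{z}\mu(z)\bigl(f(xwz)-f(xw)\bigr)^2\;=\;\sum_{z}\mu(z)\bigl(f(xzw)-f(xw)\bigr)^2,
\]
so that $L\Gamma(f)(x)$ may be rewritten purely in terms of the value $f(xzw)$.

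After this cosmetic but decisive rewriting, all four terms in the integrand involve only the four values $a=f(x)$, $b=f(xz)$, $c=f(xw)$, $d=f(xzw)$. Setting $p=b-a$ and $q=d-c$, and using $d-b-c+a=q-p$, the total integrand simplifies as
\[
(d-c)^2-(b-a)^2-2(b-a)(d-b-c+a)\;=\;q^2-p^2-2p(q-p)\;=\;(q-p)^2.
\]
This delivers the clean identity
\[
\Gamma_2(f)(x)\;=\;\tfrac{1}{4}\sum_{w,z\in\dX}\mu(w)\mu(z)\bigl(f(xzw)-f(xz)-f(xw)+f(x)\bigr)^2,
\]
which is manifestly non-negative, so $\Gamma_2(f)\ge 0\cdot\Gamma(f)$ pointwise on $\dX$.

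The only real obstacle is conceptual rather than computational: one must notice that the non-commutativity of $\dX$ creates the discrepancy between $xwz$ and $xzw$ and that conjugacy invariance is exactly the property that absorbs it. Once the relabeling $z\mapsto w^{-1}zw$ is performed, the terms conspire into a perfect square, with reversibility nowhere used, as promised by the theorem. As a byproduct, the explicit identity above records the curvature defect and could plausibly be used later (e.g.\ in an iterated local Poincaré bound for the varentropy).
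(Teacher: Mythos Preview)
Your proof is correct and follows essentially the same strategy as the paper: compute $\Gamma_2$ directly and use conjugacy invariance to rewrite the awkward term so that the integrand collapses to a perfect square, arriving at the very same identity $4\Gamma_2(f)(x)=\sum_{w,z}\mu(w)\mu(z)\bigl(f(xzw)-f(xz)-f(xw)+f(x)\bigr)^2$. The only cosmetic difference is that the paper first normalizes to $x=\id$, $f(\id)=0$ and then uses the pair bijection $(z,w)\mapsto (w,w^{-1}zw)$ to symmetrize, whereas your single-variable substitution $z\mapsto w^{-1}zw$ (for fixed $w$) reaches the square more directly and avoids the intermediate bookkeeping term $2\bigl(\sum_z\mu(z)f(z)\bigr)^2$.
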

\begin{proof}
We make two simplifying observations. The first one is that our generator $L$ commutes with the shift operator $T_x\colon\cA\to\cA$ defined by  $(T_xf)(z):=f(xz)$, for any $x\in\dX$. It  follows that  for all $f,g\in\cA$, we also have $\Gamma(T_xf,T_xg)= T_x\Gamma(f,g)$ and $\Gamma_2(T_xf,T_xg)=T_x\Gamma_2(f,g)$. Consequently, the functional inequality (\ref{def:CDK}) needs only be verified at a single point, say the identity element $\id$. The second observation is that the functions $Lf,\Gamma(f,g)$ and $\Gamma_2(f,g)$ remain unchanged if we add a constant to $f$ or $g$. Therefore, we may restrict the proof of (\ref{def:CDK}) to test functions $f\in\cA$ that satisfy $f(o)=0$. With those simplifications at hand, we easily compute:
\begin{eqnarray*}
%\Gamma (f,f)(o) & = & \frac{1}{2}\sum_{z\in\dX}\mu(z)f^2(z)\\
2L\Gamma(f)(o)  & = &  \sum_{z,w\in\dX}\mu(z)\mu(w)\left[f^2(zw)-2f(z)f(zw)\right];\\
2\Gamma (f,Lf)(o) & = & \sum_{z,w\in\dX}\mu(z)\mu(w)f(z)\left[f(zw)-f(z)-f(w)\right].
\end{eqnarray*}
Consequently, we see that
\begin{eqnarray}
\label{Gamma2L}
4\Gamma_2(f)(o) & = & \sum_{z,w\in\dX}\mu(z)\mu(w)F(z,w)+2\left(\sum_{z\in\dX}\mu(z)f(z)\right)^2,
\end{eqnarray}
where we have introduced the short-hand
\begin{eqnarray*}
F(z,w) & := & f^2(zw)-4f(z)f(zw)+2f^2(z).
\end{eqnarray*}
It is now time to use our assumption (A1): the function 
$(z,w)\mapsto(w,w^{-1}zw)$ is a bijection on $\dX^2$ which preserves the measure $\mu\otimes\mu$. Thus, we may use it as a change of variables in the first sum at (\ref{Gamma2L}) to replace $F(z,w)$ by $F(w,w^{-1}zw)$, or even by $\frac 12\left(F(z,w)+F(w,w^{-1}zw)\right)$. But by definition of $F$, we have
\begin{eqnarray*}
F(w,w^{-1}zw) & = & f^2(zw)-4f(w)f(zw)+2f^2(w),
\end{eqnarray*}
so that 
\begin{eqnarray*}
\frac 12\left(F(z,w)+F(w,w^{-1}zw)\right)  & = & f^2(zw)-2\left(f(z)+f(w)\right)f(zw)+f^2(w)+f^2(z)\\
& = & \left(f(zw)-f(z)-f(w)\right)^2-2f(z)f(w).
\end{eqnarray*}
Inserting this back into the above computation finally gives
\begin{eqnarray*}
4\Gamma_2(f)(o) & = & \sum_{z,w\in\dX}\mu(z)\mu(w) \left(f(zw)-f(z)-f(w)\right)^2,
\end{eqnarray*}which is indeed non-negative, as desired. 
\end{proof}
\begin{remark}[Positive curvature]\label{rk:kappa}When the rate function $\mu$ is supported on elements of order $2$,   the conclusion can be improved as follows: keeping only the diagonal terms in the last sum, and writing $\mu_{\min}$ for the minimum non-zero value of $\mu$, we have
\begin{eqnarray*}
4\Gamma_2(f)(o) & = & \sum_{z,w\in\dX}\mu(z)\mu(w) \left(f(zw)-f(z)-f(w)\right)^2\\
& \ge & \mu_{\min}\sum_{z\in\dX}\mu(z)\left(f(z^2)-2f(z)\right)^2\\
& = & 4\mu_{\min}\sum_{z\in\dX}\mu(z)f^2(z)\\
& = & 8\mu_{\min}\Gamma(f)(o).
\end{eqnarray*}
because $f(z^2)=f(\id)=0$ by assumption. Thus, a conjugacy-invariant random walk generated by elements of order $2$ has  Bakry-\'Emery curvature $\kappa\ge 2\mu_{\min}$. This estimate is sharp, as can be seen by considering the classical example of simple random walk on the hypercube, or the transposition walk on permutations.
\end{remark}
Our interest in the Bakry-\'Emery curvature arises from the classical fact that it  implies a \emph{local Poincar\'e inequality} for the Markov chain under consideration. Here, the word `local' refers to the fact that the underlying measure is the law of the chain at an arbitrary time $t$, rather than the usual equilibrium measure. This result is typically only stated for a more restrictive class of test functions than the one needed here, so we prefer to include a proof for completeness. As in the introduction, we let $(X_t)_{t\ge 0}$ denote  a random walk on $\dX$ with generator (\ref{def:L}), starting from the identity. 
\begin{corollary}[Local Poincar\'e estimate]Fix a function $f\colon\dX\to\dR$ and a time $t\ge 0$. If $\EE\left[\Gamma f(X_t)\right]<\infty$, then $\EE\left[f^2(X_t)\right]<\infty$ and we have
\begin{eqnarray*}
\Var\left[f\left(X_t\right)\right] & \le & 2t\,\EE\left[\Gamma f(X_t)\right].
\end{eqnarray*}
\end{corollary}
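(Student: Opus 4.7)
The plan is to apply the classical Bakry-\'Emery semigroup interpolation, leveraging the non-negative curvature established in the previous theorem. Write $P_t := e^{tL}$ for the Markov semigroup, so that $(P_t h)(\id) = \EE[h(X_t)]$ whenever the right-hand side makes sense. Assuming first that $f$ is bounded, I would introduce
\[
\phi(s) \;:=\; P_s\bigl((P_{t-s}f)^2\bigr),\qquad s\in[0,t].
\]
Using $\partial_s P_s = P_s L$, the bilinearity of $\Gamma$, and the definition (\ref{def:Gamma}), a direct calculation yields $\phi'(s) = 2\,P_s\Gamma(P_{t-s}f)$, so integrating from $0$ to $t$ and evaluating at $\id$ produces the identity
\[
\Var[f(X_t)] \;=\; 2\int_0^t P_s\Gamma(P_{t-s}f)(\id)\,ds.
\]

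The key step is then to bound each integrand by $P_t\Gamma f(\id)$, which will follow from the Bakry-\'Emery gradient estimate $\Gamma(P_u g)\le P_u\Gamma g$ (valid for every bounded $g$ and $u\ge 0$). I would derive this from non-negative curvature by the same interpolation, now applied to $\psi(s) := P_s\Gamma(P_{u-s}g)$ on $[0,u]$: an analogous differentiation gives $\psi'(s) = 2P_s\Gamma_2(P_{u-s}g)\ge 0$ by the previous theorem, so $\psi(0)\le\psi(u)$. Combined with positivity of $P_s$, this bounds the integrand above by $P_s P_{t-s}\Gamma f = P_t\Gamma f$, and the estimate $\Var[f(X_t)]\le 2t\,\EE[\Gamma f(X_t)]$ follows for bounded $f$.

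Finally I would remove the boundedness assumption by truncation. Set $f_n := (f\wedge n)\vee(-n)$; since truncation is a $1$-Lipschitz contraction, $\Gamma f_n \le \Gamma f$ pointwise, and the bounded case applied to $f_n$ yields $\Var[f_n(X_t)] \le 2t\,\EE[\Gamma f(X_t)] =: C$ uniformly in $n$. The random variables $Y_n := f_n(X_t) - \EE[f_n(X_t)]$ are thus bounded in $L^2$ by $\sqrt{C}$, while $f_n(X_t)\to f(X_t)\in\dR$ almost surely since $f_n\to f$ pointwise. A quick Chebyshev argument shows that the centering constants $\EE[f_n(X_t)]$ must remain bounded---otherwise $f_n(X_t)$ would diverge on a set of probability at least $3/4$, contradicting the pointwise limit---so, extracting a subsequence along which $\EE[f_n(X_t)]\to a\in\dR$ and invoking Fatou's lemma for $(f(X_t)-a)^2$ yields both $f(X_t)\in L^2$ and $\Var[f(X_t)]\le C$. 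The only substantive obstacle is the gradient bound in the second paragraph: the usual chain-rule derivation is unavailable in our discrete setting, but the interpolation argument sidesteps it precisely because only polynomial expressions in $P_{u-s}g$ are differentiated.
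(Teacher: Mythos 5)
Your proof is correct, and its core is the same as the paper's: semigroup interpolation for the variance combined with the gradient commutation implied by the non-negative Bakry--\'Emery curvature, followed by truncation for unbounded $f$. Two points of comparison are worth noting. First, you derive the bound $\Gamma(P_u g)\le P_u\Gamma g$ from scratch via the auxiliary function $\psi(s)=P_s\Gamma(P_{u-s}g)$, whereas the paper simply quotes the sub-commutation property; your version, with $P_u$ on the right-hand side, is in fact exactly the form needed to turn the integrand into $P_t\Gamma f$ (the inequality as displayed in the paper, $\Gamma(P_sf)\le e^{-2\kappa s}\Gamma f$ without the semigroup on the right, appears to be a typo), so spelling this out is a genuine plus. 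Second, your removal of the boundedness assumption differs from the paper's: the paper rewrites the truncated inequality with an independent copy $Y_t$, uses that $(f_n(x)-f_n(y))^2$ increases to $(f(x)-f(y))^2$, and passes to the limit by monotone convergence, which yields $\EE[f^2(X_t)]<\infty$ and the variance bound in one stroke; you instead use the uniform bound $\Gamma f_n\le\Gamma f$, a Chebyshev argument to show the centerings $\EE[f_n(X_t)]$ stay bounded, a subsequence extraction, and Fatou applied to $(f(X_t)-a)^2$ together with the variational characterization of the variance. Both limiting arguments are valid; the paper's is slightly cleaner (no subsequences, no tightness argument for the means), while yours is more self-contained in that it never needs the monotonicity of $n\mapsto(f_n(x)-f_n(y))^2$.
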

\begin{proof}Let us first make the stronger assumption that $f\colon\dX\to\dR$ is bounded. As usual, we let $(P_t)_{t\ge 0}$ denote the  Markov semi-group generated by $L$ (i.e. $P_t=e^{tL}$).  Our starting point is the following well-known integral formula, which is easily checked by differentiating both sides w.r.t. $t$ (see, e.g. \cite[Problem 2.12.a]{Ramon}): for all $t\ge 0$, 
\begin{eqnarray*}
 P_t(f^2)-\left(P_t f\right)^2 & = & 2\int_0^t P_{t-s}\Gamma\left(P_sf\right){\rm d}s.
\end{eqnarray*}
Now, the functional inequality (\ref{def:CDK}) is well known to imply (in fact, to be equivalent to) the following sub-commutation property: for all $f\in\cA$,
\begin{eqnarray*}
\forall s\ge 0,\qquad \Gamma (P_sf) & \le & e^{-2\kappa s}\Gamma f.
\end{eqnarray*}
Inserting this back into the previous identity, we arrive at
\begin{eqnarray}
\label{integral}
P_t(f^2)-\left(P_t f\right)^2 & \le & \left(\int_0^t 2e^{-2\kappa s} {\rm d}s\right)P_{t}\Gamma f.
\end{eqnarray}
When $\kappa\ge 0$, the integral is at most $2t$, and evaluating the resulting inequality at our initial state $\id$  yields exactly the desired result. Now, in the general case where $f$ is not necessarily bounded, we may replace it with its $[-n,n]$-truncation $f_n:=(f\wedge n)\vee{-n}$ and apply the first part of the proof to obtain
\begin{eqnarray*}
\Var\left[f_n\left(X_t\right)\right] & \le & 2t\,\EE\left[\Gamma f_n(X_t)\right].
\end{eqnarray*}
Introducing an independent copy $Y_t$ of $X_t$, this can be rewritten as follows:
\begin{eqnarray}
\label{intermed}
\frac{1}{2}\EE\left[\left(f_n\left(X_t\right)-f_n(Y_t)\right)^2\right] & \le & 2t\,\EE\left[\Gamma f_n(X_t)\right].
\end{eqnarray}
Finally, observe that the $[-n,n]$-truncation enjoys the following monotonicity property:
\begin{eqnarray*}
\forall x,y\in\dX,\qquad \left[f_n(x)-f_n(y)\right]^2 & \underset{n\to\infty}{\uparrow} & \left[f(x)-f(y)\right]^2.
\end{eqnarray*}
Thus, on both sides of (\ref{intermed}), the random variable inside the expectation is almost-surely non-decreasing in $n$, and we may safely invoke  monotone convergence to conclude that
\begin{eqnarray*}
\frac{1}{2}\EE\left[\left(f\left(X_t\right)-f(Y_t)\right)^2\right] & \le & 2t\,\EE\left[\Gamma f(X_t)\right].
\end{eqnarray*}
But the right-hand side is finite by assumption, hence so is the left-hand side. By the independence of $X_t$ and $Y_t$, this forces $\EE[f^2(X_t)]<\infty$, and the result is proved.
\end{proof}
\begin{remark}[Ollivier curvature]Conjugacy-invariant random walks are well known to be non-negatively curved in a different sense, namely according to the Wasserstein-based definition of Ollivier \cite{MR2371483,MR2484937}. This implies the weaker local Poincaré inequality
\begin{eqnarray*}
\forall t\ge 0,\qquad \Var\left[f\left(X_t\right)\right] & \le & t\,\mathrm{Lip}^2(f),
\end{eqnarray*}
where $\mathrm{Lip}(f):=\sup\{|f(xz)-f(x)|\colon x,z\in\dX,\mu(z)>0\}$. The substantial gain obtained by replacing the ``worst-case'' gradient bound appearing on the right-hand side by its ``expected'' version $\EE\left[\Gamma f(X_t)\right]$   will turn out to be crucial for our purpose.
\end{remark}
\begin{remark}[Positive curvature]When the Bakry-\'Emery curvature $\kappa$ is strictly positive, we may alternatively bound the integral in (\ref{integral}) by $1/\kappa$ instead of $2t$ to obtain the uniform-in-time local Poincaré inequality
\begin{eqnarray*}
\forall t\ge 0,\qquad \Var\left[f\left(X_t\right)\right] & \le & \frac{1}{\kappa}\,\EE\left[\Gamma f(X_t)\right].
\end{eqnarray*}
This applies, in particular, when $\mu$ is supported on elements of order $2$ as per Remark \ref{rk:kappa}. It leads to a substantial improvement of our varentropy estimate when $t$ is large. 
\end{remark}
\subsection{Gradient of the logarithmic heat kernel}
\label{sec:logheat}
Fix $t\ge 0$ and let  $f_t\colon x\mapsto\PP(X_t=x)$ denote the probability mass generating function of $X_t$, also known as the \emph{heat kernel}. In view of the above corollary, we have
\begin{eqnarray}
\label{reduction}
\Varent(X_t) & \le & 2t\,\EE\left[\Gamma\log f_t(X_t)\right].
%\ = \ t\,\sum_{x,z\in\dX}\mu(z)f_t(x)\log^2\left\{\frac{f_t(x)}{f_t(xz)}\right\}.
\end{eqnarray}
Thus, our task now boils down to finding a sharp estimate on the  expected squared gradient of the logarithmic heat kernel, and this is exactly the content of Proposition \ref{pr:log-grad} below. We introduce the universal function $\cU\colon\dR_+\to\dR_+$ defined as follows:
\begin{eqnarray}
\label{def:U}
\cU(t) & := & 2t\,\EE\left[\log_+^2\left\{\frac{1+N_{t}}{t}\right\}\right],
\end{eqnarray}
where $\log_+(u):=\max\{\log u,0\}$ is the positive part of the $\log$, and where $N_t$ denotes a Poisson random variable with mean $t$. In the next section, this function will be bounded from above by a constant multiple of the  explicit function $\cV$ defined at (\ref{def:V}). Thus, our main estimate is a consequence of (\ref{reduction}) and the following result. 
\begin{proposition}[Gradient estimate for the log-heat kernel]\label{pr:log-grad}For all $t\ge 0$,
\begin{eqnarray*}
2t\,\EE\left[\Gamma\log f_t(X_t)\right]
& \le & \sum_{z\in\dX}\cU\left(\mu(z)t\right).
\end{eqnarray*}
\end{proposition}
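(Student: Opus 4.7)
The plan is to prove, for each $z\in\supp\mu$ with $s:=\mu(z)t$, the per-generator inequality
\[
\EE\bigl[\log^2(M(X_t)/s)\bigr] \;\le\; 2\,\EE\bigl[\log_+^2((1+N_s)/s)\bigr],\qquad M(x):=\EE\bigl[N_t^{z^{-1}}\mid X_t=x\bigr],
\]
after which the proposition follows by summation over $z$ and the elementary identity $2\Gamma\log f_t(x)=\sum_z\mu(z)(\log f_t(xz)-\log f_t(x))^2$. The starting ingredient is a Palm--Mecke computation: inserting an extra $z^{-1}$-jump at a time $U$ uniform in $[0,t]$ produces a walk $\tilde X_t$ with $\EE[N_t^{z^{-1}}\,\mathbf{1}_{X_t=x}]=s\cdot\PP(\tilde X_t=x)$. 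Splitting the Poisson process at $U$ writes $\tilde X_t\stackrel{d}{=}X_U z^{-1}X'_{t-U}$ as a product of independent walks, and the class-invariance $f_t(gyg^{-1})=f_t(y)$---the genuine content of (A1)---then collapses the Chapman--Kolmogorov sum to $\PP(\tilde X_t=x)=f_t(xz)$. Combined with reversibility $\mu(z^{-1})=\mu(z)$, this yields the key identity $\log f_t(xz)-\log f_t(x)=\log(M(x)/s)$, which is precisely the ``measure-preserving trajectory transformation'' announced in the introduction: conjugacy invariance makes insertion anywhere distributionally equivalent to right-translation by $z^{-1}$.

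Next I decompose $\log^2 u=\log_+^2 u+\log_+^2(1/u)$ and treat each half separately. For the \emph{upper} part $\EE[\log_+^2(M/s)]$, since $\log_+^2$ vanishes on $[0,1]$ one has $\log_+^2(M/s)\le(M/s)\log_+^2(M/s)$ pointwise. Writing $M/s=\EE[W/s\mid X_t]$ with $W:=N_t^{z^{-1}}\sim\mathrm{Poisson}(s)$ and observing that $r\mapsto r\log_+^2 r$ is convex on $[0,\infty)$ (it vanishes on $[0,1]$, its value and first derivative match at $r=1^+$, and the second derivative $2(\log r+1)/r$ is non-negative for $r\ge 1$), conditional Jensen gives $(M/s)\log_+^2(M/s)\le\EE[(W/s)\log_+^2(W/s)\mid X_t]$. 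A single index shift in the Poisson sum identifies $\EE[(N_s/s)\log_+^2(N_s/s)]$ with $\EE[\log_+^2((1+N_s)/s)]$, which closes the upper-half bound.

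For the \emph{lower} part $\EE[\log_+^2(s/M)]$ I apply the same Mecke identity in the opposite direction: setting $M'(x):=\EE[N_t^z\mid X_t=x]=s\,f_t(xz^{-1})/f_t(x)$, multiplication of the two identities produces the algebraic relation $M(x)\,M'(xz)=s^2$, hence $s/M(x)=M'(xz)/s$. A direct change of variable $y=xz^{-1}$ in the defining sum verifies $\EE[R(X_t)G(X_t)]=\EE[G(X_t z)]$ for $R:=M'(X_t)/s$ and any test function $G$; taking $G=\log_+^2 R$ and substituting the algebraic identity gives $\EE[\log_+^2(s/M)]=\EE[R\log_+^2 R]$. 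Since $R=\EE[N_t^z/s\mid X_t]$, the Jensen-plus-Poisson argument of the previous paragraph applies verbatim (with $z$ in place of $z^{-1}$) and bounds this by $\EE[\log_+^2((1+N_s)/s)]$. Summing the two halves yields the factor $2$ and completes the per-generator inequality.

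The main obstacle is this lower-tail control: direct approaches such as $\log^2 x\le(x-1)^2/x$ combined with the conditional-variance estimate $\EE[(M/s-1)^2]\le 1/s$ give only $\EE[\log^2(M/s)]\le 1/s$, which is too weak for small $s$ where the target is of order $\log^2(1/s)$. The symmetry $M(x)M'(xz)=s^2$ is the twist that converts the troublesome lower tail into a second upper tail amenable to Jensen, and its validity rests squarely on the class-invariance of $f_t$---reaffirming that conjugacy invariance is used ``in a crucial way'' as the introduction announces.
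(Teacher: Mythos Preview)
Your argument is correct, and it follows a genuinely different route from the paper's. The paper first symmetrises via reversibility to replace $\log^2$ by $\log_+^2$, then builds a trajectorial map $\phi$ that inserts a \emph{conjugate} of $z$ at every possible position; counting preimages yields the inequality $f_t(xz)/f_t(x)\ge \EE\!\left[s/(1+N_{t,z})\mid X_t=x\right]$ with $N_{t,z}$ a Poisson variable defined through conjugates, and a single Jensen with the convex map $u\mapsto\log_+^2(1/u)$ finishes. You instead obtain, via the Mecke formula for the marked Poisson process of jumps together with the class invariance $f_s(wz)=f_s(zw)$, the \emph{exact} identity $f_t(xz)/f_t(x)=\EE[N_t^{z^{-1}}\mid X_t=x]/s$; the price is that you must treat the two tails of $\log^2$ separately, using convexity of $r\mapsto r\log_+^2 r$ and the Poisson size-bias shift for the upper tail, and the neat duality $M(x)M'(xz)=s^2$ plus a change of measure to reduce the lower tail to a second upper tail. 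Your counting variable $N_t^{z^{-1}}$ (raw number of $z^{-1}$-jumps) is different from the paper's $N_{t,z}$ (jumps equalling a running conjugate of $z$), though both are Poisson$(s)$. The paper's approach is shorter and needs only one Jensen step; yours trades the somewhat ad hoc map $\phi$ for a standard Palm computation and an explicit equality for the log-gradient, which may be more portable if one later wants two-sided or higher-moment control of $\log f_t(xz)-\log f_t(x)$.
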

The remainder of this section is devoted to the proof of this proposition. 
In order to estimate the term $\log \frac{f_t(x)}{f_t(xz)}$ that appears when explicitating the definition of $\Gamma\log f_t(x)$, we establish a correspondence between random-walk trajectories that end up at  $x$ and random-walk trajectories  that end up at  $xz$, and use conjugacy-invariance to control the incurred change of measure. This is the content of the following technical lemma, which requires a bit of notation. Given $z\in\dX$ and a sequence $w=(w_1,\ldots,w_n)\in\dX^n$, we introduce the key quantity
\begin{eqnarray}
\label{def:lt}
\ell_z(w)& := & \sum_{j=1}^n{\bf 1}_{w_j=(w_{j+1}\cdots w_n) z (w_{j+1}\cdots w_n)^{-1}}.
\end{eqnarray}
Also, we let $\Val(w):=w_1\cdots w_n$ denote the evaluation of the word $w$ in the group. 
\begin{lemma}[Trajectorial correspondence]\label{lm:phi}Fix $z\in\dX$ and $n\in\dN$. There exists a function $\phi\colon\dX^n\times[n+1]\to\dX^{n+1}$ such that for all $(w,i)\in\dX^{n}\times[n+1]$,
\begin{enumerate}[(i)]
\item $\Val(\phi(w,i)) = \Val(w)z$;
\item $\mu^{\otimes{(n+1)}}(\phi(w,i))=\mu^{\otimes n}(w)\mu(z)$;
\item $|\phi^{-1}(\phi(w,i))| = 1+\ell_z(w)$.
\item A sequence $\eta\in\dX^{n+1}$ is in the image of $\phi$ if and only if $\ell_z(\eta)\ne 0$.
\end{enumerate}
\end{lemma}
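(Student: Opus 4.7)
The plan is to define $\phi$ by \emph{insertion with conjugation}: given $(w,i) \in \dX^n \times [n+1]$, set
\[
\phi(w, i) \;:=\; (w_1, \ldots, w_{i-1}, z_i, w_i, \ldots, w_n), \qquad \text{where } z_i \;:=\; (w_i \cdots w_n) \, z \, (w_i \cdots w_n)^{-1},
\]
with the empty-product convention $z_{n+1} = z$. The identity $z_i(w_i \cdots w_n) = (w_i \cdots w_n) z$ immediately yields (i). Property (ii) is an equally direct consequence of the conjugacy-invariance assumption (A1): since $z_i$ is a conjugate of $z$, $\mu(z_i) = \mu(z)$, and $\mu^{\otimes(n+1)}(\phi(w,i))$ factors as $\mu(z)\,\mu^{\otimes n}(w)$.

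For (iii) and (iv), the key observation is the following: a sequence $\eta \in \dX^{n+1}$ lies in the image of $\phi$ if and only if one can delete a single entry $\eta_j$ and recover a valid source, which forces
\[
\eta_j \;=\; (\eta_{j+1} \cdots \eta_{n+1}) \, z \, (\eta_{j+1} \cdots \eta_{n+1})^{-1}.
\]
This is precisely the $j$-th indicator in the sum defining $\ell_z(\eta)$, so $\eta \in \im(\phi)$ iff $\ell_z(\eta) \ne 0$, which is (iv). Moreover, each valid index $j$ determines the preimage $(w,j)$ uniquely (by deletion), so $|\phi^{-1}(\eta)| = \ell_z(\eta)$.

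The remaining and technically most delicate step is to verify that $\ell_z(\eta) = 1 + \ell_z(w)$ when $\eta = \phi(w,i)$, which upgrades the preceding count to the one stated in (iii). I would establish this by splitting the sum $\sum_{j=1}^{n+1}$ defining $\ell_z(\eta)$ into three regimes. The term $j=i$ contributes exactly $1$ by construction of $z_i$. For $j > i$, the suffix $\eta_{j+1} \cdots \eta_{n+1}$ equals $w_j \cdots w_n$ and $\eta_j = w_{j-1}$, so the $j$-th indicator in $\eta$ matches the $(j{-}1)$-th indicator in $w$. For $j < i$, the defining relation of $z_i$ makes the suffix telescope to $\eta_{j+1} \cdots \eta_{n+1} = (w_{j+1} \cdots w_n)\, z$, and the cancellation $(az)z(az)^{-1} = a z a^{-1}$ (with $a = w_{j+1} \cdots w_n$) shows that the $j$-th indicator in $\eta$ coincides with the $j$-th indicator in $w$. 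Summing the three contributions yields the claimed identity. The main obstacle is purely combinatorial bookkeeping---tracking the index shift at $j > i$ and the absorption of the extra $z$ factor at $j < i$---but no conceptual ingredient is needed beyond the conjugation trick already used in (i)-(ii).
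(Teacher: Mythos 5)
Your proposal is correct and follows essentially the same route as the paper: the same insertion map $\phi(w,i)=(w_1,\ldots,w_{i-1},\xi,w_i,\ldots,w_n)$ with $\xi=(w_i\cdots w_n)z(w_i\cdots w_n)^{-1}$, the same use of conjugacy invariance for (ii), and the same identification of preimages with indices $j$ satisfying $\eta_j=(\eta_{j+1}\cdots\eta_{n+1})z(\eta_{j+1}\cdots\eta_{n+1})^{-1}$, split into the cases $j=i$, $j>i$ (index shift) and $j<i$ (absorption of the extra $z$). Your three-regime bookkeeping for $\ell_z(\phi(w,i))=1+\ell_z(w)$ is exactly the paper's verification of (iii), so there is nothing to add.
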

\begin{proof}
Let $\phi\colon\dX^n\times[n+1]\to\dX^{n+1}$ be defined as follows: for any $(w,i)\in\dX^{n}\times[n+1]$,
\begin{eqnarray}
\label{def:phi}
\phi(w,i)& := & \left(w_1,\ldots,w_{i-1},\xi,w_i,\ldots,w_n\right),\\
\label{def:xi}
\textrm{ with } \xi & := & (w_iw_{i+1}\cdots w_n)z(w_iw_{i+1}\cdots w_n)^{-1}.
\end{eqnarray}
In  words, the sequence $\phi(w,i)$ is obtained from the sequence $w=(w_1,\ldots,w_n)$ by inserting a certain element $\xi\in\dX$  at the $i-$th position,  $\xi$ being  chosen precisely so that Property (i) is satisfied. The second property is clear, since we have
 \begin{eqnarray*}
 \mu^{\otimes(n+1)}(\phi(w,i)) & = & \mu^{\otimes n}(w)\mu(\xi) \ = \ \mu^{\otimes n}(w)\mu(z),
 \end{eqnarray*}
thanks to Assumption A1. Finally, note that by construction, the pre-image $\phi^{-1}(\eta)$ of a given sequence $\eta=(\eta_1,\ldots,\eta_{n+1})\in\dX^{n+1}$ is exactly the set of pairs  of the form 
$
 \left((\eta_1,\ldots,\eta_{j-1},\eta_{j+1},\ldots,\eta_{n+1}),j\right),
$
where $j\in[n+1]$  satisfies the constraint 
\begin{eqnarray}
\label{constraint:j}
\eta_j & = & (\eta_{j+1}\cdots \eta_{n+1}) z (\eta_{j+1}\cdots \eta_{n+1})^{-1}.
\end{eqnarray}This readily implies (iv). Moreover, when we specialize this to the sequence $\eta:=\phi(w,i)$ defined at (\ref{def:phi}), the  constraint (\ref{constraint:j}) rewrites as follows:
 \begin{itemize}
 \item either $j=i$;
 \item or $j>i$ and $w_{j-1}=(w_j\cdots w_n) z (w_j\cdots w_n)^{-1}$;
  \item or $j<i$ and $w_j=(w_{j+1}\cdots w_{i-1}\xi w_i\cdots w_n) z (w_{j+1}\cdots w_{i-1}\xi w_i\cdots w_n)^{-1}$.
\end{itemize}
But thanks to the definition of $\xi$ at (\ref{def:xi}), the condition in the case $j<i$ simplifies to $w_{j}=(w_{j+1}\cdots w_n) z (w_{j+1}\cdots w_n)^{-1}$, and (iii) follows.
\end{proof}
Let us now exploit this trajectorial correspondence to  relate the probabilities of the two events $\{X_t=x\}$ and $\{X_t=xz\}$. We  use the random walk representation
\begin{eqnarray}
X_t & := & W_1W_2\cdots W_{N_t},
\end{eqnarray}
where  $(W_n)_{n\ge 1}$ denotes a sequence of i.i.d. samples from $\mu$, and $(N_t)_{t\ge 0}$ a unit-rate Poisson point process,  independent of $(W_n)_{n\ge 1}$. With this data at hand, we define 
\begin{eqnarray}
N_{t,z} & :=& \ell_z(W_1,\ldots,W_{N_t}),
\end{eqnarray}
and will later show that this is a Poisson random variable with mean $t\mu(z)$ (Lemma \ref{lm:poisson}). The following identity  relates the events $\{X_t=x\}$ and $\{X_t=xz\}$, as promised. 
\begin{lemma}[Change-of-measure formula]\label{lm:identity}For any $t> 0$ and any $x,z\in\dX$, we have
\begin{eqnarray*}
\PP(X_t=xz,N_{t,z}\ne 0) & = & t\mu(z)\EE\left[\frac{1}{1+N_{t,z}}{\bf 1}_{(X_t=x)}\right].
\end{eqnarray*}
\end{lemma}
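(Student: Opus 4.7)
The plan is to expand both sides using the trajectory representation $X_t = W_1 \cdots W_{N_t}$ and then invoke Lemma \ref{lm:phi} as a weighted bijection between length-$n$ trajectories ending at $x$ and length-$(n+1)$ trajectories ending at $xz$. Conditioning on $N_t = m$ and unpacking, I would first write
\[
\PP(X_t = xz, N_{t,z} \ne 0) \;=\; \sum_{m \ge 1} e^{-t} \frac{t^m}{m!} \sum_{\substack{\eta \in \dX^m \\ \Val(\eta) = xz \\ \ell_z(\eta) \ne 0}} \mu^{\otimes m}(\eta),
\]
where the $m = 0$ term is discarded because $\ell_z$ of the empty word vanishes.

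The core step is then a change of variables in the inner sum. Property (iv) of $\phi$ identifies the index set with the image of $\phi: \dX^{m-1} \times [m] \to \dX^m$, so I would reparameterize by $(w, i)$ in the domain, with each image point $\eta$ counted with multiplicity $|\phi^{-1}(\eta)| = 1 + \ell_z(w)$ by property (iii). Combined with (i), which forces $\Val(w) = x$, and (ii), which yields $\mu^{\otimes m}(\phi(w,i)) = \mu^{\otimes(m-1)}(w)\,\mu(z)$, the inner sum collapses to
\[
m\,\mu(z) \sum_{\substack{w \in \dX^{m-1} \\ \Val(w) = x}} \frac{\mu^{\otimes(m-1)}(w)}{1 + \ell_z(w)}.
\]

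To finish, I would absorb the factor $m$ into $\frac{1}{m!}$, pull out a $t$, and re-index by $n = m - 1$, arriving at
\[
t\,\mu(z) \sum_{n \ge 0} e^{-t} \frac{t^n}{n!} \sum_{\substack{w \in \dX^n \\ \Val(w) = x}} \frac{\mu^{\otimes n}(w)}{1 + \ell_z(w)}.
\]
The right-hand side is visibly $t\,\mu(z)\,\EE\bigl[\tfrac{1}{1 + N_{t,z}}\,\mathbf{1}_{X_t = x}\bigr]$ under the same trajectory representation, which closes the argument. The one delicate point is the exact cancellation between the fibre-multiplicity $1 + \ell_z(w)$ picked up from inverting $\phi$ and the reciprocal weight $\tfrac{1}{1 + N_{t,z}}$ appearing on the right; this cancellation is precisely what part (iii) of Lemma \ref{lm:phi} was set up to deliver, with conjugacy invariance (entering through (ii) via the definition of $\xi$) ensuring that the reparameterization is measure-preserving.
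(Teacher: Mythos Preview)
Your proof is correct and follows essentially the same route as the paper: condition on the number of jumps, invoke Lemma~\ref{lm:phi} to turn the sum over length-$m$ trajectories hitting $xz$ with $\ell_z\ne 0$ into a sum over length-$(m-1)$ trajectories hitting $x$ weighted by $\tfrac{1}{1+\ell_z(w)}$, and then use the Poisson identity $m\cdot e^{-t}\tfrac{t^m}{m!}=t\cdot e^{-t}\tfrac{t^{m-1}}{(m-1)!}$ to re-index. The only cosmetic difference is that the paper keeps the Poisson weights abstract as $\PP(N_t=n+1)$ and cites the identity $(n+1)\PP(N_t=n+1)=t\,\PP(N_t=n)$, whereas you expand them and cancel by hand.
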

\begin{proof}Since $N_{t,z}\le N_t$, we can safely write 
\begin{eqnarray*}
\PP(X_t=xz,N_{t,z}\ne 0)
& = & \sum_{n=0}^{\infty}\PP(X_t=xz,N_{t,z}\ne 0,N_t=n+1)\\
& =&  \sum_{n=0}^{\infty}\PP(N_t=n+1)\PP(W_1\cdots W_{n+1}=xz,\ell_z(W_1,\ldots,W_{n+1})\ne 0).
\end{eqnarray*}
Now, for each $n\ge 0$, we may invoke Lemma \ref{lm:phi} to write
\begin{eqnarray*}
\PP(W_1\cdots W_{n+1}=xz,\ell_z(W_1,\ldots,W_{n+1})\ne 0)  
& = & \sum_{\eta\in \phi(\dX^n\times[n+1])}\mu^{\otimes {(n+1)}}(\eta){\bf 1}_{\Val(\eta)=xz}\\& = & \sum_{(w,i)\in \dX^n\times[n+1]}\frac{\mu^{\otimes {(n+1)}}(\phi(w,i))}{|\phi^{-1}(\phi(w,i))|}{\bf 1}_{\Val(\phi(w,i))=xz}\\
& = & (n+1)\mu(z)\sum_{w\in \dX^n}\frac{\mu^{\otimes {(n)}}(w)}{1+\ell_z(w)}{\bf 1}_{\Val(w)=x}\\
& = & (n+1)\mu(z)\EE\left[\frac{1}{1+\ell_z(W_1,\ldots,W_n)}{\bf 1}_{W_1\cdots W_n=x}\right].
\end{eqnarray*}
Inserting this back into the previous computation and using the Poisson identity $(n+1)\PP(N_t=n+1)=t\PP(N_t=n)$, we arrive at
\begin{eqnarray*}
\PP(X_t=xz,N_{t,z}\ne 0) & = & t\mu(z)\sum_{n=0}^\infty\PP(N_t=n)\EE\left[\frac{1}{1+\ell_z(W_1,\ldots,W_n)}{\bf 1}_{W_1\cdots W_n=x}\right]\\
 & = & t\mu(z)\EE\left[\frac{1}{1+N_{t,z}}{\bf 1}_{X_t=x}\right],
\end{eqnarray*}
which concludes the proof of the identity. 
\end{proof}
As announced earlier, the key quantity $N_{t,z}$ featuring in the above identity happens to have a remarkable distribution, thanks to conjugacy invariance again. 
\begin{lemma}[Poisson law]\label{lm:poisson}
$N_{t,z}$ is a Poisson random variable with mean $t\mu(z)$. 
\end{lemma}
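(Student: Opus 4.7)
The plan is to condition on $N_t = n$ and show that, conditionally, $N_{t,z}$ is distributed as $\mathrm{Binomial}(n,\mu(z))$; the claim then follows at once from the classical Poisson thinning identity, since $N_t$ has law $\mathrm{Poisson}(t)$ and is independent of $(W_n)_{n\ge 1}$.

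To this end, I would set $V_j := W_{j+1}\cdots W_n$ (with the convention $V_n := \id$) and introduce the indicators
\[
B_j \;:=\; {\bf 1}_{W_j \,=\, V_j z V_j^{-1}}, \qquad j = 1,\dots,n,
\]
so that $N_{t,z} = B_1 + \cdots + B_n$ on $\{N_t = n\}$. The decisive step --- and the one in which conjugacy invariance is crucial --- is the claim that $B_1,\dots,B_n$ are i.i.d.\ $\mathrm{Bernoulli}(\mu(z))$. Since $W_j$ is drawn from $\mu$ and is independent of $\sigma(W_{j+1},\dots,W_n)$, conditioning on the latter $\sigma$-field and invoking assumption (A1) yields
\[
\PP\bigl(B_j = 1 \,\big|\, W_{j+1},\dots,W_n\bigr) \;=\; \mu\bigl(V_j z V_j^{-1}\bigr) \;=\; \mu(z).
\]
Because this conditional probability is a deterministic constant, $B_j$ is both marginally $\mathrm{Bernoulli}(\mu(z))$ and independent of $\sigma(W_{j+1},\dots,W_n)$; in particular, $B_j$ is independent of the tuple $(B_{j+1},\dots,B_n)$, each term of which is measurable with respect to $\sigma(W_{j+1},\dots,W_n)$. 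Iterating this observation from $j = n$ down to $j = 1$ produces the desired mutual independence.

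Once this is established, the conditional law of $N_{t,z}$ given $\{N_t = n\}$ is $\mathrm{Binomial}(n,\mu(z))$, and averaging over the Poisson law of $N_t$ gives
\[
\PP(N_{t,z} = k) \;=\; \sum_{n \ge k} e^{-t}\frac{t^n}{n!}\binom{n}{k}\mu(z)^k(1-\mu(z))^{n-k} \;=\; e^{-t\mu(z)}\frac{(t\mu(z))^k}{k!},
\]
which is precisely $\mathrm{Poisson}(t\mu(z))$. I do not foresee any genuine obstacle here: the only mildly subtle point is that the independence must be read off in the \emph{backward} direction (from large $j$ to small $j$), which is forced by the fact that $V_j$ depends only on the strict future $W_{j+1},\dots,W_n$ of step $j$.
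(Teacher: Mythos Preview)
Your argument is correct and essentially identical to the paper's: both condition on $N_t=n$, use the backward filtration $\sigma(W_{j+1},\ldots,W_n)$ together with conjugacy invariance (A1) to see that the indicators $B_j$ are i.i.d.\ $\mathrm{Bernoulli}(\mu(z))$, and then invoke Poisson thinning. The only cosmetic difference is that you spell out the thinning computation explicitly.
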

\begin{proof}
Fix $n\in\dN$ and, for each $j\in[n]$, consider the event 
\begin{eqnarray*}
A_j & := & \left\{W_j=(W_{j+1}\cdots W_n)z(W_{j+1}\cdots W_n)^{-1}\right\}.
\end{eqnarray*} 
Clearly, $A_j$ belongs to the $\sigma-$field $\cF_j:=\sigma(W_j,\ldots W_n)$, and moreover
\begin{eqnarray}
\PP\left(A_j|\cF_{j+1}\right) & = & \mu\left(W_{j+1}\cdots W_nzW_n^{-1}\cdots W_{j+1}^{-1}\right) \ = \ \mu(z),
\end{eqnarray}
by conjugacy invariance. This shows that the events $A_1,\ldots,A_n$ are independent, each having probability $\mu(z)$. Consequently, the random variable
\begin{eqnarray*}
\ell_z(W_1,\ldots,W_n) & = & \sum_{j=1}^n{\bf 1}_{A_j},
\end{eqnarray*}
has a Binomial distribution with parameters $n$ and $\mu(z)$. Since $N_t$ is independent of $(W_n)_{n\ge 1}$, we deduce that the conditional law of $N_{t,z}:=\ell_z(W_1,\ldots,W_{N_t})$ given $N_t$ is a Binomial distribution with parameters $N_t$ and $\mu(z)$. But $N_t$ itself is a Poisson random variable with mean $t$, so the result follows from the Poisson thinning theorem. 
\end{proof}
We can finally prove our main gradient estimate. 
\begin{proof}[Proof of Proposition \ref{pr:log-grad}]
By definition, we have
\begin{eqnarray*}
\EE\left[\Gamma\log f_t(X_t)\right]% & = & \sum_{x\in\dX}f_t(x)(\Gamma\log f_t)(x)\\
& = & \frac{1}{2}\sum_{x,z\in\dX}\mu(z)f_t(x)\log^2\left\{\frac{f_t(x)}{f_t(xz)}\right\}\\
& \le & \frac{1}{2}\sum_{x,z\in\dX}\mu(z)\left(f_t(x)\vee f_t(xz)\right)\log^2\left\{\frac{f_t(x)}{f_t(xz)}\right\}.
\end{eqnarray*}
But, by reversibility (Assumption A2), the bijective change of variables $(x,z)\mapsto (xz,z^{-1})$
leaves the summand unchanged, while interchanging the roles of $x$ and $xz$. We may exploit this symmetry to restrict the entire sum to those pairs $(x,z)\in\dX^2$ satisfying $f_t(x)\ge f_t(xz)$, and multiply the result by $2$. This leads precisely to
\begin{eqnarray*}
\EE\left[\Gamma\log f_t(X_t)\right]
& \le & \sum_{x,z\in\dX}\mu(z)f_t(x)\log^2_+\left\{\frac{f_t(x)}{f_t(xz)}\right\}. 
\end{eqnarray*}
Now, fix $x,z\in\dX$. Since $\PP(X_t=xz,N_{t,z}\ne 0)\le \PP(X_t=xz)= f_t(xz)$, Lemma \ref{lm:identity} provides the following estimate:
\begin{eqnarray*}
\frac{f_t(xz)}{f_t(x)} & \ge & \EE\left[\left.\frac{t\mu(z)}{1+N_{t,z}}\right|{X_t=x}\right].
\end{eqnarray*} 
But the function $u\mapsto \log_+^2\left\{\frac{1}{u}\right\}$ is decreasing and convex, so Jensen's inequality yields
\begin{eqnarray*}
\log^2_+\left\{\frac{f_t(x)}{f_t(xz)}\right\} & \le & \EE\left[\left.\log^2_+\left\{\frac{1+N_{t,z}}{t\mu(z)}\right\}\right|X_t=x\right].
\end{eqnarray*}
Inserting this back into the previous computation concludes the proof, since $N_{t,z}$ has the same law as $N_{t\mu(z)}$ by Lemma \ref{lm:poisson}. 
\end{proof}

\subsection{Explicit estimates}
\label{sec:free}
In view of (\ref{reduction}) and Proposition \ref{pr:log-grad}, only two tasks remain in order to complete the proof of our main theorem.
\begin{enumerate}
\item Proving that the function $\cU$ defined at (\ref{def:U}) is bounded by a constant multiple of the explicit function $\cV$. This is the content of Lemma \ref{lm:UV} below.
\item Proving that conversely, $\cV$ is at most a constant multiple of the  varentropy in the special case where $\dX=\dZ$ and $\mu=\frac 12{\bf 1}_{\{-1,+1\}}$. This is Lemma \ref{lm:sharp} below.
\end{enumerate}
\begin{lemma}[Explicit estimate on $\cU$]\label{lm:UV} We have 
\begin{eqnarray*}
\cU(t) & \le & 
\left\{
\begin{array}{ll}
2+\frac{2}{t} & \textrm{ for all }t\ge 0;\\
2t\log^2\left(1+\frac{1}{t}\right) &  \textrm{ for all }t\in [0,e^{-1}].
\end{array}
\right.
\end{eqnarray*}
In particular, we have the uniform bounds $\cU\le 8$ and $\cU\le 21.5\,\cV$.
\end{lemma}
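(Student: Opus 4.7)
The plan is to prove the two piecewise estimates separately—each reduces to a single pointwise inequality applied inside the expectation defining $\cU$—and then derive the two uniform consequences by a case split at the threshold $t=e^{-1}$.

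For the bound $\cU(t)\le 2+2/t$ valid on all of $[0,\infty)$, I would use the elementary fact $\log(1+x)\le x$ for $x\ge 0$, which yields $\log_+^2(u)\le (u-1)_+^2\le (u-1)^2$ for every $u>0$. Substituting $u=(1+N_t)/t$ and taking expectations,
\begin{eqnarray*}
\cU(t) \;\le\; \frac{2}{t}\,\EE\bigl[(N_t-t+1)^2\bigr] \;=\; \frac{2}{t}\bigl(\Var(N_t)+1\bigr) \;=\; 2+\frac{2}{t},
\end{eqnarray*}
using $\EE[N_t]=\Var(N_t)=t$.

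For the bound $\cU(t)\le 2t\log^2(1+1/t)$ on $[0,e^{-1}]$, the key observation is that when $t\le e^{-1}$ the ratio $(1+N_t)/t$ is almost surely at least $1/t\ge e$, so the positive part becomes redundant and we are working in the region where $u\mapsto\log^2 u$ is concave (its second derivative $2(1-\log u)/u^2$ is non-positive on $[e,\infty)$). Jensen's inequality then gives
\begin{eqnarray*}
\EE\!\left[\log^2\!\left(\frac{1+N_t}{t}\right)\right] \;\le\; \log^2\!\left(\EE\!\left[\frac{1+N_t}{t}\right]\right) \;=\; \log^2\!\left(1+\frac{1}{t}\right),
\end{eqnarray*}
and multiplying by $2t$ delivers the claim.

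For the uniform bound $\cU\le 8$ I would split at $t=e^{-1}$: on $[e^{-1},\infty)$ the first estimate gives $\cU(t)\le 2+2e<8$, while on $[0,e^{-1}]$ the second estimate is a continuous function vanishing at $t=0$ whose maximum is readily seen by calculus to stay well below $2$. For the comparison $\cU\le 21.5\,\cV$, on $[0,e^{-1}]$ the obvious inequality $(1+1/\sqrt{t})^2\ge 1+1/t$ gives $\log(1+1/t)\le 2\log(1+1/\sqrt{t})$, which combined with the second estimate yields the cleaner bound $\cU(t)\le 8\,\cV(t)$. The regime $[e^{-1},\infty)$ is where the constant $21.5$ actually bites: here $2+2/t$ is decreasing while $\cV$ is increasing (a property recorded just after the definition of $\cV$), so the ratio $(2+2/t)/\cV(t)$ is monotone decreasing, hence maximized at $t=e^{-1}$. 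The claim therefore reduces to the single numerical verification $21.5\,e^{-1}\log^2(1+\sqrt{e})\ge 2+2e$, which holds by a narrow margin and is the only slightly delicate step of the proof—this is also what pins the universal constant down to the somewhat unusual value $21.5$.
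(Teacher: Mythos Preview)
Your proof is correct and follows essentially the same route as the paper's: the same two pointwise inequalities (the bound $\log u\le u-1$ for the first estimate, and Jensen via concavity of $\log^2$ on $[e,\infty)$ for the second), and the same case split at $t=e^{-1}$ with the same comparison $1+1/t\le(1+1/\sqrt t)^2$ to extract the factor $8$ on the small-$t$ regime. The only cosmetic difference is that for the uniform bound $\cU\le 8$ on $[0,e^{-1}]$ the paper routes through $\cU\le 8\cV\le 8$, whereas you bound $2t\log^2(1+1/t)$ directly; both work.
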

\begin{proof}
Using the classical bound $\log u\le u-1$ in the definition of $\cU$ yields
\begin{eqnarray*}
\cU(t) &  \le & \frac{2}{t}\EE\left[\left(1+N_t-t\right)_+^2\right]\\
& \le & \frac{2}{t}\EE\left[\left(1+N_t-t\right)^2\right]\\
& = & 2+\frac{2}{t}\\
& \le & \frac{2+2e}{\cV(\frac 1e)}\cV(t)
\end{eqnarray*} 
where the last inequality  is valid only when $t\ge \frac{1}{e}$, and uses the fact that $\cV$ is increasing. For $t\le \frac{1}{e}$, we instead use the fact that $\log^2_+$ is concave on $[e,\infty)$ to write
\begin{eqnarray*}
\cU(t) &  \le & 2t\log^2_+\EE\left[\frac{1+N_t}{t}\right] \\
 & = & 2t\log^2\left(1+\frac{1}{t}\right)\\
 & \le & 8\cV(t),
\end{eqnarray*} 
where the last line uses $1+\frac{1}{t}\le \left(1+\frac{1}{\sqrt{t}}\right)^2$. Combining those two estimates, we see that $\cU\le \max\left\{2+2e,8\right\}=8$, and that $\cU\le c\,\cV$, where $c=\max\left\{\frac{2+2e}{\cV(1/e)},8\right\}\approx 21.305$.
\end{proof}
\begin{lemma}[Sharpness]\label{lm:sharp}In the special case where $\dX=\dZ$ and $\mu=\frac 12{\bf 1}_{\{-1,+1\}}$, we have
\begin{eqnarray*}
\forall t\ge 0,\qquad  \Varent(X_t)  & \ge & c\cV(t),
\end{eqnarray*}
for some universal constant $c>0$.
\end{lemma}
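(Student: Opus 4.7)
\medskip
\noindent\textbf{Strategy.} The plan is to reduce the problem to two asymptotic regimes and to patch them together via continuity on a compact interval. Since $\cV$ is continuous and strictly positive on $(0,\infty)$, with $\cV(t) \sim \tfrac{1}{4} t \log^2(1/t)$ as $t \to 0^+$ and $\cV(t) \to 1$ as $t \to \infty$, and since $t \mapsto \Varent(X_t)$ is likewise continuous and strictly positive on $(0,\infty)$ (because $X_t$ is non-degenerate for every $t>0$), it is enough to establish the two asymptotic lower bounds
\[
\liminf_{t \to 0^+} \frac{\Varent(X_t)}{t\log^2(1/t)} > 0
\qquad\text{and}\qquad
\liminf_{t \to \infty} \Varent(X_t) > 0.
\]
Once these hold, the ratio $\Varent(X_t)/\cV(t)$ is a positive continuous function on $(0,\infty)$ bounded away from zero near both endpoints, hence uniformly.

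\medskip
\noindent\textbf{Small $t$.} I would use the Skellam representation $X_t = P^+_t - P^-_t$ with independent Poisson$(t/2)$ variables $P^\pm_t$, so that $f_t(k) = e^{-t} I_{|k|}(t)$. The Taylor expansions $I_0(t) = 1 + O(t^2)$ and $I_1(t) = t/2 + O(t^3)$ yield $f_t(0) = 1 - t + O(t^2)$ and $\max_{k \neq 0} f_t(k) = f_t(\pm 1) = t/2 + O(t^2)$. Applying the conditional-variance decomposition to $Y := \log f_t(X_t)$ with respect to the event $A := \{X_t = 0\}$, together with the obvious pointwise bound $\EE[Y \mid A^c] \leq \log f_t(\pm 1) = \log(t/2) + O(t^2)$, gives
\[
\Varent(X_t) \;\geq\; f_t(0)\bigl(1 - f_t(0)\bigr)\bigl(\log f_t(0) - \log(t/2) - O(t^2)\bigr)^2,
\]
which is of order $t\log^2(1/t)$ as $t \to 0^+$.

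\medskip
\noindent\textbf{Large $t$.} Here I would invoke the uniform modified-Bessel asymptotic
\[
I_n(t) \;=\; \frac{e^{t}}{\sqrt{2\pi t}}\, e^{-n^2/(2t)}\bigl(1 + o(1)\bigr), \qquad \text{uniformly for } |n| \leq t^{2/3},
\]
equivalently a local CLT for the Skellam distribution. Combined with exponential tail bounds on $|X_t|$ (which has mean $0$ and variance $t$), this yields the representation
\[
\log f_t(X_t) \;=\; -\tfrac{1}{2}\log(2\pi t) - \tfrac{X_t^2}{2t} + R_t,
\]
with $R_t \to 0$ in $L^2$. A direct moment computation from $X_t = P^+_t - P^-_t$ gives $\EE[X_t^2] = t$ and $\EE[X_t^4] = t + 3t^2$, so $\Var(X_t^2/(2t)) = \tfrac{1}{2} + \tfrac{1}{4t}$. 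Consequently $\Varent(X_t) \to \tfrac{1}{2} > 0$ as $t \to \infty$, and the required liminf bound follows.

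\medskip
\noindent\textbf{Main obstacle.} The genuinely delicate step is the $L^2$ control of the remainder $R_t$ in the large-$t$ regime, since the local CLT provides only a pointwise asymptotic in the Gaussian bulk. One must separately argue that the tail event $\{|X_t| > t^{2/3}\}$ contributes negligibly both to $\EE[\log f_t(X_t)]$ and to $\EE[\log^2 f_t(X_t)]$. This can be handled via a Chernoff-type bound for $X_t$ combined with the elementary lower bound $f_t(k) \geq e^{-t}(t/2)^{|k|}/|k|!$ obtained from the monotone path of $|k|$ jumps in a single direction, which keeps $\log f_t(X_t)$ suitably integrable against the Gaussian tails. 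The small-$t$ step is, by contrast, essentially mechanical once the two leading Bessel coefficients are known.
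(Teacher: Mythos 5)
Your proposal is correct in strategy and matches the paper in its overall structure (reduce to the two asymptotic regimes by continuity and positivity of both functions on compacts, then treat $t\to 0$ and $t\to\infty$ separately), and your small-$t$ argument is essentially the paper's in a different guise: the paper lower-bounds $\Varent(X_t)=\EE\bigl[\log_+^2\{f_t(X_t)/f_t(\widetilde X_t)\}\bigr]$ (with $\widetilde X_t$ an independent copy) by restricting to the event $\{X_t=0,\ \widetilde X_t\in\{-1,1\}\}$, which yields $2f_t(0)f_t(1)\log_+^2\{f_t(0)/f_t(1)\}\sim t\log^2(1/t)$, exactly the same comparison of the mass at $0$ against the mass at $\pm 1$ that your conditional-variance decomposition over $A=\{X_t=0\}$ produces. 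The genuine divergence is at large $t$: the paper avoids your ``main obstacle'' entirely by a soft argument --- the local CLT gives joint convergence in distribution of $\bigl(\sqrt{t}f_t(X_t),\sqrt{t}f_t(\widetilde X_t)\bigr)$ to $\bigl(g(B),g(\widetilde B)\bigr)$, the scaling factors cancel inside the ratio, and Fatou's lemma applied to $\log_+^2$ of the ratio gives $\liminf_t\Varent(X_t)\ge\Varent(B)=1/2$, with no need for any $L^2$ control of a remainder or any tail estimates. Your route, via the uniform Bessel asymptotics on $\{|n|\le t^{2/3}\}$ (a legitimate range, since the relative error is governed by $n^4/t^3$), Chernoff tails, and the crude bound $f_t(k)\ge e^{-t}(t/2)^{|k|}/|k|!$, is more laborious but is workable and buys a strictly stronger conclusion, namely the actual limit $\Varent(X_t)\to 1/2$ together with the clean identification $\Var\bigl(X_t^2/(2t)\bigr)=\tfrac12+\tfrac1{4t}$, whereas the paper only needs (and only gets) a liminf. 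Two cosmetic points: positivity of $\Varent(X_t)$ for $t>0$ follows from $f_t(0)\ne f_t(1)$ rather than from non-degeneracy of $X_t$ alone, and your use of $\max_{k\ne 0}f_t(k)=f_t(\pm1)$ implicitly invokes the monotonicity of $I_n(t)$ in $n$, which is standard but worth a word.
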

\begin{proof}
The functions $t\mapsto \Varent(X_t)$ and $t\mapsto\cV(t)$ are both continuous and  positive on $(0,\infty)$, so their ratio is bounded away from $0$ on every compact subset of $(0,\infty)$.  Thus, we only need to investigate the asymptotics as $t\to 0$ and as $t\to\infty$. To this end, we introduce an independent copy $(\widetilde{X}_t)_{t\ge 0}$ of $(X_t)_{t\ge 0}$ and write 
\begin{eqnarray*}
\Varent(X_t) & = & \EE\left[\log^2_+\left\{\frac{f_t(X_t)}{f_t(\widetilde{X}_t)}\right\}\right]\\
& \ge & \EE\left[\log^2_+\left\{\frac{f_t(X_t)}{f_t(\widetilde{X}_t)}\right\}{\bf 1}_{X_t=0,\widetilde{X}_t\in\{-1,1\}}\right]\\
& = & 2f_t(1)f_t(0)\log^2_+\frac{f_t(0)}{f_t(1)}.
\end{eqnarray*}
Now as $t\to 0$, we have $f_t(0)=1-o(1)$ and $f_t(1)=t/2+o(t)$, so we obtain
\begin{eqnarray*}
\Varent(X_t) & \ge & (1-o(1))t\log^2\left(\frac 1t\right),
\end{eqnarray*}
which is of the same order of magnitude as $\cV(t)$. On the other hand, in the limit $t\to \infty$, the Local Central Limit Theorem easily implies the convergence
\begin{eqnarray*}
\sqrt{t}f_t(X_t) & \xrightarrow[t\to\infty]{d} & g(B),
\end{eqnarray*}
where $B$ is a standard Gaussian random variable and $g$ its density. Since $\widetilde{X}_t$ is an independent copy of $X_t$, we actually have the joint convergence
\begin{eqnarray*}
\left(\sqrt{t}f_t(X_t),\sqrt{t}f_t(\widetilde{X}_t)\right) & \xrightarrow[t\to\infty]{d} & \left(g(B),g(\widetilde{B})\right),\end{eqnarray*}
with $\widetilde{B}$ an independent copy of $B$. 
But $(u,v)\mapsto \log_+^2\frac{u}{v}$ is continuous on $(0,\infty)^2$, so
\begin{eqnarray*}
\log_+^2\left\{\frac{f_t(X_t)}{f_t(\widetilde{X}_t)}\right\} & \xrightarrow[t\to\infty]{d} & \log_+^2\left\{\frac{g(B)}{g(\widetilde{B})}\right\},
\end{eqnarray*}
Taking expectations and invoking Fatou's Lemma, we conclude that
\begin{eqnarray*}
\liminf_{t\to \infty}\Varent(X_t) & \ge & \Varent(B).
\end{eqnarray*}
The right-hand side is well known and easily seen to be equal to $1/2$. Since $\cV(t)\to 1$  as $t\to\infty$, the ratio $\Varent(X_t)/\cV(t)$ does indeed remain bounded away from $0$ as $t\to\infty$, and the proof is complete.
\end{proof}

\bibliographystyle{plain}
\bibliography{draft}
\end{document}